\newif\ifarxive
\DeclareMathAlphabet{\mathcal}{OMS}{zplm}{m}{n}
	\newtheorem{theorem}{Theorem}
	\newtheorem{corollary}[theorem]{Corollary}
	\newtheorem{lemma}[theorem]{Lemma}
	\newtheorem{proposition}[theorem]{Proposition}
	\theoremstyle{definition}
	\newtheorem{definition}[theorem]{Definition}
	\newtheorem{example}[theorem]{Example}
	\newtheorem{remark}[theorem]{Remark}
	\newtheorem{theorem}{Theorem}[section]
	\newtheorem{lemma}{Lemma}[section]
	\newtheorem{proposition}{Proposition}[section]
	\theoremstyle{definition}
	\newtheorem{example}{Example}[section]
	\numberwithin{equation}{section}
\newcommand{\R}{\mathbb{R}}
\renewcommand{\H}{\mathcal{H}}
\renewcommand{\P}{\mathcal{P}}
\newcommand{\D}{\mathcal{D}}
\newcommand{\T}{\mathcal{T}}
\newcommand{\Int}{{\rm int  \,}}
\newcommand{\conv}{{\rm conv  \,}}
\newcommand{\Cl}{{\rm cl  \,}}
\newcommand{\bd}{{\rm bd  \,}}
\newcommand{\vertex}{{\rm vert\,}}
\newcommand*{\colonequals}{\mathrel{\vcenter{\baselineskip0.5ex%
\lineskiplimit0pt\hbox{\scriptsize.}\hbox{\scriptsize.}}}=}
\newcommand{\abstracttext}{
	A polyhedral approximation of a convex body can be calculated by solving approximately an associated multiobjective convex program (MOCP).
	An MOCP can be solved approximately by Benson type algorithms, which compute outer and inner polyhedral approximations of the problem's upper image.
	Polyhedral approximations of a convex body can be obtained from polyhedral approximations of the upper image of the associated MOCP.
	We provide error bounds in terms of the Hausdorff distance for the polyhedral approximations of a convex body in dependence of the stopping criterion of the primal and dual Benson type algorithms which are applied to the associated MOCP.
}
\newcommand{\keywordtext}{convex body, polyhedral approximation, multiobjective programming, projection problem, duality }
\newcommand{\keywordtext}{Convex body; Polyhedral approximation; Multiobjective programming; Projection problem; Duality. }
	\title{On the approximation error for approximating convex bodies using multiobjective optimization}
	\author{Andreas L\"{o}hne$^1$, Fangyuan Zhao$^1$, Lizhen Shao$^2$}
\begin{document}
	
	\footnotetext[1]{Friedrich Schiller University Jena, Faculty of Mathematics and Computer Science, Jena, Germany, [andreas.loehne$|$fangyuan.zaho]@uni-jena.de}
	\footnotetext[2]{University of Science and Technology Beijing, School of Automation and Electrical Engineering, Beijing, China, lshao@ustb.edu.cn}
	\maketitle

	\begin{abstract} \abstracttext
	
	\medskip
	\noindent
	{\bf Keywords:} \keywordtext
	\medskip

	\noindent
	{\bf MSC 2010 Classification:}
	\end{abstract}
	
\else

	\begin{document}
	\bibliographystyle{unsrt}
	\setcounter{page}{1}
	
	\vspace*{1.0cm}
	\title[Approximating convex bodies using multiobjective optimization]{On the approximation error for approximating convex bodies using multiobjective optimization}
	\author[A. L\"{o}hne, F. Zhao, L. Shao]{Andreas L\"{o}hne$^{1,*}$, Fangyuan Zhao$^{1}$, Lizhen Shao$^2$}
	\maketitle
	\vspace*{-0.6cm}

	\begin{center}
	{\footnotesize {\it

	$^1$Department of Mathematics, Friedrich Schiller University Jena, Jena, Germany\\
	$^2$School of Automation and Electrical Engineering, University of Science and Technology Beijing, Beijing, China%

	}}\end{center}
	
	\vskip 4mm {\small\noindent {\bf Abstract.}
	\abstracttext
	
	\noindent {\bf Keywords.}
	\keywordtext
	}

	\renewcommand{\thefootnote}{}
	\footnotetext{ $^*$Corresponding author.
	\par
	E-mail addresses: andreas.loehne@uni-jena.de (A. L\"{o}hne), fangyuan.zhao@uni-jena.de (F. Zhao), lshao@ustb.edu.cn (L. Shao).
	\par
	%Received January 23, 2015; Accepted February 16, 2016.
	 }
\fi

%\begin{center}
%{\footnotesize {\it
%
%$^1$Department of Mathematics, Friedrich Schiller University Jena, Jena, Germany\\
%$^2$School of Automation and Electrical Engineering, University of Science and Technology Beijing, Beijing, China%
%
%}}\end{center}

\section{Introduction}

We aim to compute polyhedral approximations $Y_{approx}$ of a convex body $Y$. This task is referred to by the following {\em approximate convex projection problem},
\begin{subequations}
\begin{align}
     {\rm approximate} \quad Y \colonequals \{G x \mid \; g_1(x) \leq 0, \ldots,g_m(x) \leq 0\},  \tag*{\rm (CPP)}\label{CPP}
\end{align}
\end{subequations}
where $G$ is assumed to be a full row rank matrix in $\R^{q\times n}$ and the convex functions $g_i:\R^n \to \R\cup \{\infty\}$ are given. We denote by $X \colonequals\{x \in \R^n \mid g_1(x) \leq 0, \ldots,g_m(x) \leq 0 \}$ the feasible set of \ref{CPP}.
The term {\em projection problem} is used because $Y$ can be seen as a projection of the set
$$\{(x,y) \in \R^n \times \R^q \mid y = Gx,\; x \in X\}$$
into the space of $y$-components. A rigorous solution concept for the polyhedral approximation problem has been introduced in \cite{lohne2016}. Moreover, it has been shown there that polyhedral projection is equivalent to multiobjective linear programming (MOLP). Here we reduce our considerations to finding polyhedral approximations of $Y$. To this end we solve an associated multiobjective convex program (MOCP). The object of study is the approximation error of (CPP) in dependence of the approximation error in (MOCP).

The feasible set $X$ is a convex set. Moreover, $X$ is assumed to be a {\em convex body}, i.e. a convex and compact set having non-emtpy interior. Since $Y$ is the image of $X$ under the linear map $G$, it is also a convex body.

The following multiobjective convex program is associated to \ref{CPP} (see \cite{lohne2016} for a polyhedral prototype of this idea, and \cite{Shao18} for an extension to the non-polyhedral convex case):
\begin{subequations}
\begin{align}
     {\rm min} \quad \Gamma(x) = \left(
\begin{array}{c}
Gx\\
-e^TGx
\end{array}
\right) \quad w.r.t. \quad \leq_{\R^{q+1}_+} \quad s.t. \quad x \in X,  \tag*{\rm (MOCP)} \label{MOCP}
\end{align}
\end{subequations}
where $e$ denotes the vector where all entries are one and $\R^{q+1}_+ \colonequals \{x \in \R^{q+1}: x \geq 0\}$. Problem \ref{MOCP} is feasible as $X$ is non-empty. The {\em image} of the feasible set is defined by $\Gamma[X] \colonequals \{\Gamma(x) \in \R^{q+1} \mid x\in X\}$. The {\em upper image} of \ref{MOCP} is the set $\P \colonequals \Cl (\Gamma[X] + \R^{q+1}_+)$. $\Gamma[X]$ is compact because it is an image of the compact set $X$ under a linear transformation. Consequently, the upper image $\P$ can be written as $\P = \Gamma[X] + \R^{q+1}_+$.

In this article we provide tight bounds for the approximation error of the polyhedral projection problem \ref{CPP} in dependence of the prescribed tolerance $\varepsilon > 0$ for the associated multiobjective convex program \ref{MOCP}. This tolerance is defined by a certain stopping criterion in the primal and dual Benson type algorithms, respectively. Solving an arbitrary multiobjective convex program with the primal and dual Benson type algorithms, we obtain, respectively, an outer and inner approximation $\P_{approx}$ of the upper image $\P$ with
$$ d_H(\P_{approx},\P) \leq \varepsilon \sqrt{q+1}.$$
Here $q$ is the dimension of $Y$ (i.e. $\P$ is of dimension $q+1$) and $d_H$ denotes the Hausdorff distance. In the framework of arbitrary multiobjective convex programs, this bound is tight for both the primal and dual algorithms.

\ref{CPP} is solved by applying a primal or dual Benson type algorithm to the associated \ref{MOCP} with tolerance $\varepsilon>0$.
For the convex body $Y$ we obtain, respectively, an outer and inner approximation $Y_{approx}$ with
$$ d_H(Y_{approx},Y) \leq \varepsilon \sqrt{q^2+q-1}.$$
This bound is tight in the sense that both the primal and dual Benson type algorithms can induce the worst case error in the projection problem \ref{CPP}.

We see that the bound for the approximation error of \ref{CPP} differs from the bound for the approximation error for the associated \ref{MOCP} by the factor $$\gamma(q) \colonequals \sqrt{q-\frac{1}{q+1}} < \sqrt{q}.$$
 For $q \geq 2$ the bound for \ref{CPP} is bigger than the one for the associated \ref{MOCP}. Related investigations were made independently in \cite{kovavcova2021convex}.

If the functions $g_i$, $i=1,\ldots,m$ are affine, then problem \ref{CPP} reduces to a polyhedral projection problem. The equivalence between polyhedral projection and multiobjective linear programming is used in {\em bensolve tools} (a software package for polyhedral calculus and related problems, see \cite{bt}), where the MOLP solver {\em bensolve} \cite{bensolve} is utilized to solve polyhedral projection problems. The options of {\em bensolve tools} allow to control the tolerance $\varepsilon > 0$ of the associated MOLP. However, the resulting approximation error for the polyhedral approximation problems was not specified so far. This means our result is new even for the polyhedral case. 

The results in this article are based on so-called Benson type algorithms, insofar we relate our error bounds to the typical stopping criteria of these algorithms. These methods are named after Harold P. Benson, who came up with an outer approximation algorithm for MOLP problems in 1998 \cite{benson1998}. Based on geometric duality \cite{Heyde08}, a dual variant of Benson's algorithm for MOLP was proposed in 2012 \cite{ehrgott2012dual}.
However, the basic idea of these algorithms can be found earlier in the literature. For instance, in 1982, Mukhamediev \cite{Muk82} used ideas similar to the dual Benson type algorithm to solve certain global optimization problems. In 1992, Kamenev \cite{kamenev1992class} presented two adaptive methods to approximate convex bodies by polyhedra, which are similar to the primal and dual Benson type algorithms. The dual Benson type algorithm is also similar to the convex hull method by Lassez \& Lassez from 1992 \cite{lassez1992quantifier}. Benson's approach to solve MOLPs was extended to MOCP, for instance, in \cite{chan2014, dorfler2020benson, Ehrgott11, Lohne14, rennen2011}.

This article is organized as follows. Section \ref{sec:notation} is devoted to notation and preliminary results. In Section \ref{sec:primal}, the error bound for \ref{CPP} based on the primal Benson type algorithm is shown. The error bound for \ref{CPP} based on the dual Benson type algorithm is provided in Section \ref{sec:dual}. In Section \ref{sec:conclusion}, we draw some conclusions.

\section{Preliminaries}\label{sec:notation}

The following notation will be used throughout this paper. For a set $A \subseteq \R^q$, we denote the boundary, interior, closure, convex hull of $A$ by $\bd A$, $\Int A$, $\Cl A$, $\conv A$. Let $a \colonequals(a_1,\ldots,a_{q+1})^T\in \R^{q+1}$, $A \subseteq \R^{q+1}$. The vector, which omits the last component of $a$, is denoted by $\pi(a)$, i.e. for $a=(a_1,\ldots,a_{q+1})^T$ we have $\pi(a)=(a_1,\ldots,a_q)^T$. For $A \subseteq \R^{q+1}$ we set $\pi[A] \colonequals \{\pi(a): a \in A\} \subseteq \R^q$. $I$ denotes the identity matrix and $e^i$ is the $i$-th unit vector. The hyperplane
$$H \colonequals \{y\in \R^{q+1}: e^T y = 0\}$$
is frequently used.

The image of the feasible set $\Gamma[X]$, the upper image $\P$ in problem \ref{MOCP} and the convex set $Y$ in \ref{CPP} are related in the following sense.

\begin{proposition}\label{prop:MOCP}
For problem {\rm\ref{MOCP}}, the following statements hold.
\begin{enumerate}[{\rm(i)}]
  \item $\Gamma[X] = \P \bigcap H$,
  \item $Y=\pi[\Gamma[X]]$.
\end{enumerate}
\end{proposition}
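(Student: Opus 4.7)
My plan is to handle the two claims separately and in order, since both reduce to direct manipulations of the definitions.

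For part (ii), I would simply unwind definitions. By construction $\Gamma(x)=(Gx,-e^TGx)^T$ where the inner $e$ is the all-ones vector in $\R^q$, so $\pi(\Gamma(x))=Gx$. Taking the image over $x\in X$ gives $\pi[\Gamma[X]]=\{Gx\mid x\in X\}=Y$ by the definition of $Y$ in \ref{CPP}. This step is essentially a one-line verification and I do not expect any obstacle.

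For part (i), the key observation I would establish first is that $\Gamma[X]\subseteq H$. Indeed, with $e\in\R^{q+1}$ on the left and $e\in\R^q$ inside the last coordinate of $\Gamma$, we have $e^T\Gamma(x)=\sum_{i=1}^q (Gx)_i + (-e^TGx)=0$, so $\Gamma(x)\in H$ for every $x\in X$. Combined with the trivial inclusion $\Gamma[X]\subseteq\Gamma[X]+\R^{q+1}_+=\P$, this gives $\Gamma[X]\subseteq\P\cap H$.

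For the converse inclusion, I would take an arbitrary $y\in\P\cap H$ and use the fact (noted in the excerpt just before Proposition~\ref{prop:MOCP} and justified by compactness of $\Gamma[X]$) that $\P=\Gamma[X]+\R^{q+1}_+$ without taking closure. Thus $y=\Gamma(x)+r$ for some $x\in X$ and some $r\in\R^{q+1}_+$. Applying $e^T$ to both sides and using $e^Ty=0$ together with $e^T\Gamma(x)=0$ yields $e^Tr=0$, so that $r\geq 0$ with vanishing coordinate sum forces $r=0$. Hence $y=\Gamma(x)\in\Gamma[X]$, completing the equality.

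The only delicate point, and the one I would want to emphasize in the write-up, is that the representation $\P=\Gamma[X]+\R^{q+1}_+$ holds without a closure operation; this is exactly what makes the argument for the nontrivial inclusion in (i) clean, and it relies on the compactness of $\Gamma[X]$ already noted in the paper. Apart from this, both statements are essentially computational.
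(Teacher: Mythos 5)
Your proposal is correct and follows essentially the same route as the paper's proof: both establish $\Gamma[X]\subseteq\P\cap H$ from $e^T\Gamma(x)=0$, and both prove the reverse inclusion by writing a point of $\P\cap H$ as $\Gamma(x)+r$ with $r\in\R^{q+1}_+$ and forcing $r=0$ via $e^Tr=0$. Your explicit remark that $\P=\Gamma[X]+\R^{q+1}_+$ holds without closure (by compactness of $\Gamma[X]$) is exactly the observation the paper records immediately before the proposition.
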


\begin{proof}
(i) For all $y \in \Gamma[X]$, $e^T y = 0$ holds. This implies $\Gamma[X] \subseteq H$. We have $\Gamma[X] \subseteq \Gamma[X]+\R^{q+1}_+ = \P$. Thus  $\Gamma[X] \subseteq \P \bigcap H$. Now let $s\in \P \bigcap H$. Then there exist $x \in X$ and $t \in \R^{q+1}_+$ such that $s = \Gamma(x) + t$ and $e^T s= e^T \Gamma(x) + e^T t = e^T t = 0$. This yields $t=0$ and thus $s \in \Gamma[X]$.

(ii) Follows from the definitions of $\pi(\cdot)$, $\Gamma[X]$ and $Y$.
\end{proof}

A Benson type algorithm computes an \textit{outer approximation} $\P_{outer}$ (resp. \textit{inner approximation} $\P_{inner}$). This means $\P_{outer}$ (resp. $\P_{inner}$) is a superset (resp. subset) of $\P$ and close  to $\P$ in some sense. We next show that a set $\P_{outer} \supseteq \P$ yields a set $Y_{outer} \supseteq Y$ and a set $\P_{inner} \subseteq \P$ yields a set $Y_{inner} \subseteq Y$. Later we show that $Y_{outer}$ and $Y_{inner}$ are close to $Y$ in some sense.

\begin{proposition}\label{prop:appY}
	For $\P_{outer} \supseteq \P$ and $\P_{inner} \subseteq \P$ we have
\begin{enumerate}[{\rm(i)}]
  \item $Y_{outer} \colonequals \pi[\P_{outer} \bigcap H] \supseteq Y$,
  \item $Y_{inner} \colonequals \pi[\P_{inner} \bigcap H] \subseteq Y$.
\end{enumerate}
\end{proposition}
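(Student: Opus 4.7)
The plan is to reduce the statement directly to Proposition \ref{prop:MOCP}, which already packages the geometric content we need: $\Gamma[X] = \P \cap H$ and $Y = \pi[\Gamma[X]]$. With these two identities in hand, both inclusions become routine consequences of two monotonicity facts — that intersecting with a fixed set $H$ preserves inclusions, and that the image $\pi[\cdot]$ of a function preserves inclusions.

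For part (i), I would start from the hypothesis $\P_{outer} \supseteq \P$, intersect both sides with $H$ to obtain $\P_{outer} \cap H \supseteq \P \cap H$, rewrite the right-hand side as $\Gamma[X]$ using Proposition \ref{prop:MOCP}(i), and then apply $\pi$ on both sides to conclude $Y_{outer} = \pi[\P_{outer} \cap H] \supseteq \pi[\Gamma[X]] = Y$, where the last equality comes from Proposition \ref{prop:MOCP}(ii). Part (ii) proceeds symmetrically by reversing every inclusion: from $\P_{inner} \subseteq \P$ I get $\P_{inner} \cap H \subseteq \P \cap H = \Gamma[X]$, and then $\pi[\P_{inner} \cap H] \subseteq \pi[\Gamma[X]] = Y$.

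There is essentially no obstacle here; the only thing worth checking is that the monotonicity of $\pi[\cdot]$ under set inclusion is genuinely a one-line fact for any map (if $A\subseteq B$ and $a \in A$ then $\pi(a) \in \pi[B]$), so no convexity, compactness, or dimensional argument is required. The substantive content has already been isolated into Proposition \ref{prop:MOCP}, so the present proposition serves mostly as a bookkeeping step that transports inclusions from the upper-image world back to the $y$-space where the convex body $Y$ lives, setting up the later quantitative Hausdorff estimates.
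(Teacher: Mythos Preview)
Your proposal is correct and follows essentially the same approach as the paper: both reduce to Proposition~\ref{prop:MOCP} via the monotonicity of intersection with $H$ and of the image map $\pi[\cdot]$. In fact, your write-up is slightly cleaner, since the paper's printed proof of part~(ii) contains a typographical slip (it writes $\supseteq$ where $\subseteq$ is intended), which you avoid.
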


\begin{proof}
(i) From $\P_{outer} \supseteq \P$ and Prop. \ref{prop:MOCP} (i), we get $\P_{outer} \bigcap H \supseteq \P \bigcap H = \Gamma[X]$. Thus $\pi[\P_{outer} \bigcap H] \supseteq \pi[\Gamma[X]]$. Prop. \ref{prop:MOCP} (ii) yields $Y_{outer} \supseteq  Y$.

(ii) From $\P_{inner} \subseteq \P$ and Proposition \ref{prop:MOCP} (i), we get $\P_{inner} \bigcap H \subseteq \P \bigcap H = \Gamma[X]$. Thus $\pi[\P_{inner} \bigcap H] \supseteq \pi[\Gamma[X]]$. Prop. \ref{prop:MOCP} (ii) yields $Y_{inner} \subseteq  Y$.
\end{proof}

We next recall the {\em geometric dual problem} of \ref{MOCP} (cf. \cite{Lohne14}). For $t \in \R^{q+1}$, let
$$w(t) \colonequals \left(t_1,\ldots, t_{q}, 1-\sum_{i=1}^{q}t_{i}\right)^T.$$
The geometric dual of \ref{MOCP} is given by
\begin{subequations}
\begin{align}
     {\rm max} \quad D^*(t) \quad w.r.t. \;\leq_K \quad s.t. \quad \: w(t) \geq 0,  \tag*{(\rm MOCP*)} \label{dual_MOCP}
\end{align}
\end{subequations}
where
 $$D^*(t) \colonequals \left(t_1, \dots, t_{q}, \inf \limits_{x \in X}\left[w(t)^T\Gamma(x)\right]\right)^T,$$
  $K\colonequals \R_+(0, 0, \ldots, 0, 1)^T$ and $\leq_K$ is the ordering induced by the cone $K$, that is, $y^* \leq_K v^*$ iff $v^*-y^*\in K$.
The {\em lower image} of \ref{dual_MOCP} is defined as $\D \colonequals D^*[\T] - K$, where $\T \colonequals \{t \in \R^{q+1} \mid w(t) \geq 0\}$ is the feasible region of \ref{dual_MOCP}.
For $y,y^*\in \R^{q+1}$, we consider the two following hyperplane-valued maps:
\begin{eqnarray*}
{\H}:  \R^{q+1} \rightrightarrows \R^{q+1}, & & {\H}(y^*)  \colonequals \{y \in \R^{q+1} : \varphi(y,y^*)=0\},\\
{\H}^*:\R^{q+1} \rightrightarrows \R^{q+1}, & & {\H}^*(y)\colonequals \{y^* \in \R^{q+1} : \varphi(y,y^*)=0\},
\end{eqnarray*}
where $\varphi$ is a bi-linear {\em coupling function} defined as
\begin{equation*}
\varphi(y,y^*)= \sum_{i=1}^{q}y_i y^*_i+y_{q+1}\left(1-\sum_{i=1}^{q} y^*_i\right) - y^*_{q+1}.
\end{equation*}
For the {\em duality mapping}
$$\Psi :2^{\R^{q+1}} \rightarrow 2^{\R^{q+1}},\quad \Psi(F^*)\colonequals\bigcap\limits_{y^*\in F^*} \H(y^*) \cap \P,$$ the following {\em geometric duality} relation between $\P$ and $\D$ holds:

\begin{theorem}[\cite{heyde2013}]\label{thm:duality}
$\Psi$ is an inclusion reversing one-to-one mapping between the set of all $K-$maximal exposed faces of $\D$ and the set of all weakly minimal exposed faces of $\P$. The inverse map is given by
$$\Psi^{-1}(F)=\bigcap \limits_{y\in F} \H^*(y) \cap \D.$$
\end{theorem}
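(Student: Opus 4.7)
The plan is to derive the theorem from a bipolar-type identity between $\P$ and $\D$ mediated by the coupling function $\varphi$. Observe that $\varphi(y,y^*) = w(y^*)^T y - y_{q+1}^*$, so $\H(y^*) = \{y : w(y^*)^T y = y_{q+1}^*\}$ is an affine hyperplane with normal $w(y^*)$. The construction of $\D$ as $D^*[\T] - K$ is designed precisely so that every $y^* \in \D$ satisfies $w(y^*) \geq 0$ and $w(y^*)^T y \geq y_{q+1}^*$ for all $y \in \P$; in other words, $\H(y^*)$ is a supporting half-space of $\P$ from below. First I would verify the polarity identity
\begin{equation*}
\P = \{y \in \R^{q+1} : \varphi(y,y^*) \geq 0 \text{ for all } y^* \in \D\}
\end{equation*}
and the analogous description of $\D$. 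The nontrivial inclusion uses hyperplane separation: any $y \notin \P$ can be separated from $\P$, and since $\P + \R^{q+1}_+ = \P$ the separating normal may be chosen in $\R^{q+1}_+$ and normalized so that $e^T w = 1$, which is exactly the form $w = w(y^*)$ for some $y^* \in \D$ after adjusting the last coordinate.

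Next I would match faces. Since $K = \R_+ e^{q+1}$ is one-dimensional and $-K$ is a recession direction of $\D$, the element $y^* \in \D$ is $K$-maximal iff $y_{q+1}^* = \inf_{y \in \P} w(y^*)^T y$. Such points are in bijection, via $y^* \mapsto w(y^*)$, with the normalized outer normals of weakly minimal supporting hyperplanes of $\P$. A $K$-maximal exposed face $F^*$ of $\D$ is then of the form $\H^*(y_0) \cap \D$ for some $y_0 \in \bd \P$, and
\begin{equation*}
\Psi(F^*) \;=\; \P \cap \bigcap_{y^* \in F^*} \H(y^*)
\end{equation*}
is the intersection of $\P$ with all corresponding supporting hyperplanes, hence itself a weakly minimal exposed face of $\P$. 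Inclusion reversal $F_1^* \subseteq F_2^* \Rightarrow \Psi(F_2^*) \subseteq \Psi(F_1^*)$ is immediate from the intersection, and likewise for $\Psi^{-1}$.

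For bijectivity one composes the two maps. The inclusions $F^* \subseteq \Psi^{-1}(\Psi(F^*))$ and $F \subseteq \Psi(\Psi^{-1}(F))$ are formal. The main obstacle is the reverse containments. Given $y^* \in \Psi^{-1}(\Psi(F^*))$, the bipolar identity forces $\H(y^*)$ to be a supporting hyperplane of $\P$ containing the entire exposed face $\Psi(F^*)$; such $y^*$ must then lie in the $K$-maximal face of $\D$ exposed by points of $\Psi(F^*)$, which is $F^*$ itself. The delicate point throughout is aligning weak minimality (relative to the full cone $\R^{q+1}_+$) on the primal side with $K$-maximality (along a single coordinate ray) on the dual side; these match correctly because the parameterization via $w(t)$ imposes the simplex normalization $e^T w(t) = 1$, turning $\R^{q+1}_+$-normals into a $q$-parameter family indexed by $t_1,\ldots,t_q$, while $t_{q+1}$ (equivalently $y_{q+1}^*$) plays the role of the $K$-direction.
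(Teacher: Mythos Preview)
The paper does not prove this theorem at all: it is quoted verbatim from \cite{heyde2013} and used as a black box. There is therefore nothing in the present paper to compare your proposal against; the authors rely on Heyde's proof and provide none of their own.

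As for the sketch itself, the overall architecture (establish the bipolar identities $\P=\{y:\varphi(y,y^*)\geq 0\ \forall y^*\in\D\}$ and its dual, then read off the face correspondence) is the standard route to such geometric duality statements and is essentially how the cited reference proceeds. Two places in your outline would need real work before it becomes a proof. First, in the bijectivity step you assert that any $y^*$ whose hyperplane $\H(y^*)$ supports $\P$ along the whole face $\Psi(F^*)$ must already lie in $F^*$; this is exactly the statement that $F^*$ is \emph{exposed} (not merely a face), and it does not follow from the formal inclusion $F^*\subseteq\Psi^{-1}(\Psi(F^*))$ alone---you need to invoke that $F^*$ was assumed exposed and identify the exposing point explicitly. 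Second, your explanation of why weak minimality with respect to $\R^{q+1}_+$ on the primal side corresponds to $K$-maximality on the dual side is heuristic; making it precise requires showing that every weakly minimal exposed face of $\P$ admits a supporting hyperplane with normal in $\Int\R^{q+1}_+$ normalized by $e^Tw=1$, and conversely, which is where the compactness of $\Gamma[X]$ and the structure of the recession cone enter.
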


Finally, we recall two kinds of scalarizations for \ref{MOCP}. For $w \in \R^{q+1}$, $e^T w = 1$, the {\em weighted sum scalarization} is the convex program
\begin{subequations}
\begin{align}
     \min_x w^T \Gamma(x) \quad \text{s.t.} \quad g(x) \leq 0  \tag*{$($P$_1(w))$}. \label{P1}
\end{align}
\end{subequations}
The Lagrangian dual program of \ref{P1} is
\begin{subequations}
\begin{align}
     \max_{u \in \R^m} \inf_{x\in\R^n} \left[w^T\Gamma(x)+u^Tg(x)\right] \quad \text{s.t.} \quad u \geq 0  \tag*{$($D$_1(w))$}, \label{D1}
\end{align}
\end{subequations}
where $g=(g_1,\ldots,g_m)^T$ and $0 \cdot \infty \colonequals 0$. An optimal solution of \ref{P1} is a weak minimizer of \ref{MOCP} (see e.g. \cite{jahn11}).

The other kind of scalarization for \ref{MOCP} is known under different names in the literature, among them: {\em translative scalarization}, {\em Tammer-Weidner scalarization} or {\em Pascoletti-Serafini scalarization}. It depends on a parameter vector $v \in \R^{q+1}$, which typically does not belong to $\Int \P$. It is given by the convex program
\begin{subequations}
\begin{align}
     \min_{x\in \R^n,\, z\in \R} z \quad \text{s.t.}\quad  g(x)\leq 0,\; \Gamma(x) -z e -v \leq 0  \tag*{$($P$_2(v))$}. \label{P2}
\end{align}
\end{subequations}
Its Lagrangian dual problem is
\begin{subequations}
\begin{align}
     \max_{u\in \R^m \atop w \in \R^{q+1}} \inf_{x\in \R^n} \left[w^T\Gamma(x)+u^Tg(x)\right]-w^Tv  \quad \text{s.t.}\quad  u \geq 0,\; w \geq 0,\; w^Te = 1  \tag*{$($D$_2(v))$}. \label{D2}
\end{align}
\end{subequations}

In this paper, we will use the Hausdorff distance to measure the approximation error. For nonempty sets $M, N \subseteq \R^q$ with $M \subseteq N$ it can be expressed as
\begin{equation*}\label{eq:twosideH}
d_H(M,N)\colonequals \sup_{b \in N} \inf_{a \in M} \|a-b\|_2.
\end{equation*}

\begin{lemma}\label{lemma:dis}
Consider nonempty sets $M, N \subseteq \R^q$ and a vector $d\in \R^q$ such that $N + \{d\} \subseteq M \subseteq N$. Then,
$$d_H(M,N) \leq \|d\|_2.$$
\end{lemma}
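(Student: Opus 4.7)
The plan is to unwind the one-sided Hausdorff distance formula, since the hypothesis $M \subseteq N$ lets us use the simplified expression
$$d_H(M,N) = \sup_{b \in N} \inf_{a \in M} \|a-b\|_2$$
given right before the lemma. So the proof reduces to bounding the inner infimum uniformly in $b$.

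First I would fix an arbitrary $b \in N$ and produce an explicit candidate $a \in M$ whose distance to $b$ is exactly $\|d\|_2$. The natural choice is $a \colonequals b + d$: by the hypothesis $N + \{d\} \subseteq M$, we have $b + d \in M$, so $a$ is admissible in the infimum. Then $\|a - b\|_2 = \|d\|_2$, which gives $\inf_{a \in M} \|a - b\|_2 \leq \|d\|_2$.

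Since this bound is independent of $b$, taking the supremum over $b \in N$ yields $d_H(M,N) \leq \|d\|_2$, finishing the proof. There is essentially no obstacle here; the only thing to be careful about is confirming that the translation hypothesis is used in exactly the right direction (every point of $N$, when shifted by $d$, lands inside $M$), which is what makes $b + d$ a valid feasible point for the infimum over $M$.
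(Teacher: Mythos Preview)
Your proof is correct and takes essentially the same approach as the paper: fix a point of $N$, translate it by $d$ to land in $M$, and use this as a witness to bound the inner infimum by $\|d\|_2$ uniformly.
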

\begin{proof}
For arbitrary $u\in N$, we get $u + d \in M$, i.e., there exists $v \in M$ such that $u + d = v$. Since $M \subseteq N$, we get $d_H(M,N) =\sup\limits_{u \in N} \inf\limits_{a \in M} \|a-u\|_2 \leq \|d\|_2$.
\end{proof}

\section{Error bounds for the primal algorithm}\label{sec:primal}

The primal Benson type algorithm for a multiobjective convex program (with $q+1$ objectives) computes a shrinking sequence of outer polyhedral approximations $\P_k$ for $\P$, where $k$ is the iteration index. The optimal values $y_i$ of \ref{P1} for $w$ being the unit vectors $e^i$, for $i=1,\ldots,q+1$, are computed and the initial outer polyhedral approximation of $\P$ is set to $\P_0 = \{y\} + \R^{q+1}_+$. In each iteration, an arbitrary vertex $v$ of current outer approximation $\P_k$ is chosen and a point $o$ of $\Gamma[X]$ is obtained by solving \ref{P2}. If $o - v \leq \epsilon e$, for a prescribed tolerance $\epsilon > 0$, the algorithm continues to check the other vertices of $\P_k$. If $o - v \leq \epsilon e$ holds for all vertices of $\P_k$, the algorithm terminates. Otherwise, for some $v$ and corresponding $o$ with $o - v \not\leq \epsilon e$, a supporting half-space $H_+ \supseteq \P$ of $\P$ at the point $o$ is calculated by solving \ref{D2}. The new outer approximation is updated to $\P_{outer} \leftarrow \P_{outer} \bigcap H_+$. Algorithm \ref{alg:primal} provides a pseudo code of a simplified version of the primal Benson type algorithm.

\begin{algorithm}[ht]\label{alg:primal}
\caption{Simplified primal Benson type algorithm for MOCP (\cite{Lohne14})}
\LinesNumbered
\KwIn{$\epsilon$, $\P_0$}
\KwOut{An outer approximation $\P_{outer}$ of $\P$}
$V \leftarrow \emptyset$; $\P_{outer}\leftarrow \P_0$;\\
compute the vertex set $\bar V$ of $\P_{outer}$;\\
\While{$\bar V \backslash V \neq \emptyset$}
{
choose $v \in \bar V \backslash V$;\\
$(x,z,u,w) \leftarrow$ \textrm{solve}(\ref{P2},\ref{D2});\\
\uIf {$z > \epsilon$}
{
$\P_{outer} \leftarrow \P_{outer}  \bigcap \{y\in \R^{q+1}:\varphi(y,D^*(w)) \geq 0\}$;\\
update the vertex set $\bar V$ of $\P_{outer}$;\\
}
\Else
{
 $V \leftarrow V \bigcup \{v\}$;\\
}

}
\textbf{return} $\P_{outer}$\\
\end{algorithm}

Algorithm \ref{alg:primal} applied to the \ref{MOCP} associated to \ref{CPP} yields an outer approximation $\P_{outer}$ for the upper image $\P$ of \ref{MOCP} and, by Proposition \ref{prop:appY}, an outer approximation $Y_{outer}$ of $Y$. The result of the following proposition can be also found in \cite{Lohne14}, formulated in terms of {\em finite $\varepsilon$-infimizers}. For the convenience of the reader we present here a short and direct proof.

\begin{proposition}\label{remark_primal}
	For an arbitrary instance of problem \ref{MOCP} and some given $\varepsilon > 0$, let $\P_{outer}$ be the result of Algorithm \ref{alg:primal}. Then,
	$$ \P_{outer} + \varepsilon \{e\}\subseteq \P.$$
\end{proposition}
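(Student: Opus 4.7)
The proof plan relies on two observations: (a) upon termination of Algorithm \ref{alg:primal}, each vertex $v$ of $\P_{outer}$ satisfies $v + \varepsilon e \in \P$; (b) $\P_{outer}$ decomposes as $\conv(\bar V) + \R^{q+1}_+$, so the inclusion extends from vertices to all of $\P_{outer}$ by convexity.

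For step (a), consider any vertex $v \in \bar V$ at termination, meaning $v \in V$, so the test \texttt{z > $\epsilon$} in the algorithm failed for this $v$. Recall that \ref{P2} with parameter $v$ is the program $\min z$ s.t.\ $g(x) \leq 0$, $\Gamma(x) - ze - v \leq 0$. Let $(x^*, z^*)$ be the optimal primal pair returned in line~5; then $z^* \leq \varepsilon$ and $\Gamma(x^*) \leq z^* e + v$. Since $\Gamma(x^*) \in \Gamma[X] \subseteq \P$ and $\P$ is closed under addition of $\R^{q+1}_+$, the point $v + \varepsilon e = \Gamma(x^*) + (v + \varepsilon e - \Gamma(x^*))$ with $v + \varepsilon e - \Gamma(x^*) \geq (\varepsilon - z^*) e \geq 0$ lies in $\P$.

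For step (b), I first argue that $\P_{outer}$ has recession cone $\R^{q+1}_+$ and hence equals $\conv(\bar V) + \R^{q+1}_+$. Indeed $\P_0 = \{y\} + \R^{q+1}_+$ has recession cone $\R^{q+1}_+$, and each cutting halfspace $\{y : \varphi(y, D^*(w)) \geq 0\}$ added in line~7 supports the upper image $\P$ of \ref{MOCP} (by construction of $D^*(w)$ from the solution of \ref{D2}), so it contains $\P$ and therefore its outer normal $w(D^*(w))$ is nonnegative, i.e., the halfspace contains $\R^{q+1}_+$ as a recession direction. Thus the recession cone is preserved throughout the iterations, and since $\P_{outer}$ is a nonempty polyhedron with finitely many vertices $\bar V$, the representation $\P_{outer} = \conv(\bar V) + \R^{q+1}_+$ holds.

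Finally, pick an arbitrary $p \in \P_{outer}$ and write $p = \sum_i \lambda_i v_i + r$ with $v_i \in \bar V$, $\lambda_i \geq 0$, $\sum_i \lambda_i = 1$, and $r \in \R^{q+1}_+$. Then
\[
p + \varepsilon e \;=\; \sum_i \lambda_i (v_i + \varepsilon e) + r,
\]
and since each $v_i + \varepsilon e \in \P$ by step (a), the sum lies in $\conv(\P) + \R^{q+1}_+ = \P + \R^{q+1}_+ = \P$ by convexity of $\P$ and its closure under $\R^{q+1}_+$. This yields $\P_{outer} + \varepsilon\{e\} \subseteq \P$, completing the plan. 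The main obstacle is the clean justification that the recession cone of $\P_{outer}$ is exactly $\R^{q+1}_+$, which is needed to reduce the inclusion to a check on the vertex set; everything else is a direct reading of the feasibility condition for \ref{P2}.
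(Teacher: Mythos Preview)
Your proposal is correct and follows essentially the same approach as the paper: both use the stopping criterion of Algorithm~\ref{alg:primal} to obtain, for each vertex $v^i$ of $\P_{outer}$, a point $o^i=\Gamma(x^i)\in\Gamma[X]$ with $o^i \leq v^i + \varepsilon e$, and then extend to all of $\P_{outer}$ via the representation $\P_{outer}=\conv(\bar V)+\R^{q+1}_+$. The only cosmetic difference is that you first conclude $v^i+\varepsilon e\in\P$ and then invoke convexity of $\P$, whereas the paper forms $o=\sum_i \lambda_i o^i$ and bounds $o-v$ directly; you also spell out the recession-cone argument that the paper leaves implicit.
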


\begin{proof}
Let $v \in \P_{outer}$. Then $v$ can be expressed by the vertices $v^1,\ldots,v^k$ of $\P_{outer}$ and a point $c \in \R^{q+1}_+$ as
$$ v = \sum_{i=1}^{k} \lambda_i v^i + c,\quad \lambda_1,\ldots,\lambda_k \geq0,\quad \sum_{i=1}^{k} \lambda_i=1.$$
By the stopping criterion of Algorithm \ref{alg:primal}, for each $v^i$, $i=1,\ldots,k$, there exists $o^i \in \Gamma[X]$ such that $o^i - v^i \leq \epsilon e$. For $o \colonequals \sum\limits_{i=1}^{k}\lambda_i o^i \in \Gamma[X]$ we have
\begin{equation*}
	o-v =  \sum\limits_{i=1}^{k}\lambda_i (o^i - v^i) -c \leq \varepsilon e - c \leq \varepsilon e.
\end{equation*}
Thus $v +\varepsilon e$ belongs to $\Gamma[X]+\R^{q+1} =\P$, which proves the claim.
\end{proof}

We start with a bound for \ref{MOCP}, see e.g. Remark 3.4 in \cite{dorfler2020benson}.

\begin{proposition}\label{prop:pouter}
For an arbitrary instance of problem \ref{MOCP} with $q+1$ objectives, let $\P$ be the upper image and $\P_{outer}$ be the result of Algorithm \ref{alg:primal} for some given $\varepsilon > 0$. Then
$$d_H(\P_{outer},\P) \leq \epsilon\sqrt{q+1}.$$
\end{proposition}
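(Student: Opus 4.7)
The plan is to combine Proposition \ref{remark_primal} with Lemma \ref{lemma:dis} in essentially one step. Since Algorithm \ref{alg:primal} produces an outer approximation of $\P$, we always have the trivial inclusion $\P \subseteq \P_{outer}$. Proposition \ref{remark_primal} provides the complementary inclusion $\P_{outer} + \varepsilon\{e\} \subseteq \P$. These two inclusions together match exactly the hypothesis of Lemma \ref{lemma:dis} with $M = \P$, $N = \P_{outer}$, and $d = \varepsilon e \in \R^{q+1}$.

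Applying Lemma \ref{lemma:dis} then yields
\begin{equation*}
d_H(\P_{outer}, \P) = d_H(\P, \P_{outer}) \leq \|\varepsilon e\|_2 = \varepsilon \sqrt{q+1},
\end{equation*}
where the last equality uses that $e \in \R^{q+1}$ is the all-ones vector, so $\|e\|_2 = \sqrt{q+1}$.

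There is essentially no obstacle here; the substance of the argument already lies in Proposition \ref{remark_primal}, which translates the componentwise stopping criterion $o - v \leq \varepsilon e$ (verified at every vertex) into a uniform translation inclusion for the whole outer polyhedron via a convex combination argument. Given that result, the only thing to check is that the direction of the translation vector is the all-ones vector $e$, so that its Euclidean norm contributes the factor $\sqrt{q+1}$ characteristic of the dimension of the upper image. The proof is therefore just a one-line verification of the hypotheses of Lemma \ref{lemma:dis} followed by computing $\|\varepsilon e\|_2$.
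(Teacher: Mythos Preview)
Your proof is correct and follows exactly the paper's approach: invoke Proposition~\ref{remark_primal} together with the inclusion $\P\subseteq\P_{outer}$ to verify the hypothesis of Lemma~\ref{lemma:dis}, then compute $\|\varepsilon e\|_2=\varepsilon\sqrt{q+1}$. The paper's own proof is the same one-liner.
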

\begin{proof}
By Proposition \ref{remark_primal} and Lemma \ref{lemma:dis}, we get $d_H(\P_{outer},\P) \leq \|\epsilon e\|_2 = \epsilon \sqrt{q+1}$.
\end{proof}

This bound is tight as shown by the following example.

\begin{example}\label{ex_p1} Let $\varepsilon = \frac{1}{q+1}$, $\P = \{x \in \R^{q+1} \mid x + \frac{1}{q+1} e \geq 0,\; e^T (x+ \frac{1}{q+1} e) \geq 1\}$. Then Algorithm \ref{alg:primal} terminates with the initial outer approximation $\P_{outer}=\{- \frac{1}{q+1} e\}+ \R^{q+1}_+$ and we have $d_H(\P_{outer},\P) = \epsilon\sqrt{q+1}$.
\end{example}

Now we prove the error bound for \ref{CPP} solved with Algorithm \ref{alg:primal}.

\begin{theorem}\label{prop:primaldis}
	Let an arbitrary instance of \ref{CPP} be given. Let $\P_{outer}$ be the result of Algorithm \ref{alg:primal} applied to the associated \ref{MOCP} for some given $\varepsilon > 0$ and let $Y_{outer} \colonequals \pi[\P_{outer} \bigcap H]$. Then $Y_{outer} \supseteq Y$ and
$$d_H(Y_{outer},Y) \leq \epsilon\sqrt{q^2+q-1}.$$	
\end{theorem}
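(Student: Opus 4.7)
The plan is to take an arbitrary $y \in Y_{outer}$, lift it back into $\P_{outer}\cap H$, use Proposition \ref{remark_primal} to push it into $\P = \Gamma[X] + \R^{q+1}_+$, split off a component in $\Gamma[X]$, project down into $Y$, and then solve a small convex-maximization problem to bound the remaining distance.

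More concretely, the inclusion $Y_{outer} \supseteq Y$ is immediate from Proposition \ref{prop:appY}(i). For the distance bound, fix any $y \in Y_{outer}$ and pick $v \in \P_{outer} \cap H$ with $\pi(v) = y$; by definition of $H$ we then have $v = (y^T,\, -\sum_{i=1}^q y_i)^T$ and in particular $e^T v = 0$. By Proposition \ref{remark_primal}, $v + \varepsilon e \in \P = \Gamma[X] + \R^{q+1}_+$, so we may write $v + \varepsilon e = \gamma + t$ with $\gamma \in \Gamma[X]$ and $t \in \R^{q+1}_+$. Set $y' \colonequals \pi(\gamma) \in Y$. Since $e^T \gamma = 0$ (as $\gamma \in \Gamma[X] \subseteq H$), taking $e^T$ of both sides of $v + \varepsilon e = \gamma + t$ gives the key linear constraint
\begin{equation*}
e^T t \;=\; e^T v + \varepsilon(q+1) - e^T \gamma \;=\; \varepsilon(q+1).
\end{equation*}
Moreover $y - y' = \pi(v - \gamma) = \pi(t - \varepsilon e)$, so
\begin{equation*}
\|y - y'\|_2^2 \;=\; \sum_{i=1}^{q} (t_i - \varepsilon)^2.
\end{equation*}

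The heart of the argument is then the deterministic optimization problem: maximize $\sum_{i=1}^q (t_i - \varepsilon)^2$ subject to $t \in \R^{q+1}_+$ and $e^T t = \varepsilon(q+1)$. After the substitution $s_i = t_i/\varepsilon$ this becomes $\max \sum_{i=1}^q (s_i - 1)^2$ over $s_i \ge 0$, $\sum_{i=1}^{q+1} s_i = q+1$, equivalently over the simplex $\{s\in\R^q_+ : \sum_{i=1}^q s_i \leq q+1\}$. Since the objective is convex, the maximum is attained at a vertex of this simplex. The origin gives value $q$, while each vertex $(q+1)e^j$ (for $j = 1,\ldots,q$) gives $(q+1-1)^2 + (q-1) = q^2 + q - 1$. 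Hence the maximum value is $q^2+q-1$, so $\|y-y'\|_2 \leq \varepsilon \sqrt{q^2+q-1}$. Using the Hausdorff distance formulation $d_H(Y,Y_{outer}) = \sup_{b \in Y_{outer}} \inf_{a\in Y} \|a-b\|_2$ (applicable since $Y \subseteq Y_{outer}$), this gives the claim.

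The only nontrivial step is the combinatorial maximization, but convexity of the objective reduces it to inspecting the $q+1$ vertices of a simplex, which immediately gives the stated bound and explains the asymmetry (putting all of $t$'s mass onto the last coordinate only gives $q$, whereas putting it on one of the first $q$ coordinates — which do affect $\pi$ — produces the dominant $q^2$ term). No compactness or continuity arguments beyond those already used in the excerpt are needed.
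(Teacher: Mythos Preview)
Your proof is correct and follows essentially the same approach as the paper: lift a point of $Y_{outer}$ into $\P_{outer}\cap H$, apply Proposition~\ref{remark_primal} to land in $\P=\Gamma[X]+\R^{q+1}_+$, project back to $Y$, and bound the distance by maximizing a convex function over a simplex. The only cosmetic difference is that the paper parametrizes the simplex directly in terms of the difference $\bar o-\bar v\in\{t\in\R^q\mid t\le\varepsilon e,\ -e^Tt\le\varepsilon\}$, whereas you parametrize it in terms of the slack $t\in\R^{q+1}_+$ with the explicit equality $e^Tt=(q+1)\varepsilon$; these are the same simplex under the affine map $s=\varepsilon e-\pi(t)$, and the vertex computations coincide.
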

\begin{proof}	
Let $\bar v$ be an arbitrary point in $Y_{outer}$.
Then $v \colonequals(\bar v^T,-e^T\bar v)^T \in \P_{outer} \bigcap H$. By Proposition \ref{remark_primal}, we have $v+\varepsilon e \in \P$. Thus there exists $o \in \Gamma[X]$ with $o \leq v+ \epsilon e$. Since $\Gamma[X]\subseteq H$, we have $e^T o = 0$. By Proposition \ref{prop:MOCP} (ii), we have $\bar o \colonequals \pi(o) \in Y$.
We have $\bar o_i \leq \epsilon + \bar v_i$, $i=1,\ldots,q$ and $o_{q+1}=-e^T \bar o \leq \epsilon + v_{q+1} = \epsilon -e^T\bar v$.
Thus the point $\bar o - \bar v$ belongs to the polytope $T \colonequals \{t\in\R^{q} \mid -e^T t \leq \epsilon,\; t \leq \epsilon e\}$. The vertex set of $T$ is
$$\vertex T = \{(\epsilon,\ldots, \epsilon)^T,(-\epsilon q, \epsilon , \ldots, \epsilon)^T,(\epsilon,-\epsilon q, \epsilon, \ldots, \epsilon)^T,\ldots,(\epsilon, \ldots, \epsilon ,-\epsilon q)^T\}.$$
 Thus $\|\bar u-\bar v\|_2 \leq \max\{\|t\|_2:t\in \vertex T\}= \epsilon \sqrt{q^2+q-1}$.
Since $Y_{outer} \supseteq Y$ and for arbitrary $\bar v \in Y_{outer}$ there exists $\bar u \in Y$ with $\|\bar u-\bar v\|_2 \leq \epsilon \sqrt{q^2+q-1}$, we conclude that $d_H(Y_{outer},Y) \leq \epsilon\sqrt{q^2+q-1}$.
\end{proof}

The following example shows that the upper bound of Theorem \ref{prop:primaldis} is tight for all $q \geq 2$, see also Figure \ref{fig:p}.

\begin{figure}[hbt]
\resizebox{0.49\textwidth}{!}{%
\centering
\begin{tikzpicture}[x=45,z=-13,y=35,scale=2.1]
\coordinate (NE) at (1.5,1.3);
\coordinate (SW) at (-.5,-1.5);
\draw[white,fill=white] (NE) circle (0.01);
\draw[white,fill=white] (SW) circle (0.01);
\coordinate (NULL) at (0,0,0);
\coordinate (X) at (1.2,0,0);
\coordinate (Y) at (0,0,1.2);
\coordinate (Z) at (0,1.2,0);
%\coordinate (P1) at (0,0,0);
%\coordinate (P2) at (1,0,-1);
%\coordinate (P3) at (0.25,0.5,-0.75);
%\coordinate (B1) at (0,0,1);
%\coordinate (B2) at (0,1.5,0);
%\coordinate (B3) at (0.25,1.5,-0.75);
%\coordinate (B4) at (1,1.5,-1);
%\coordinate (B5) at (2,0,-1);
% y und z vertauscht
\coordinate (P1) at (0,0,0);
\coordinate (P2) at (1,-1,0);
\coordinate (P3) at (0.25,-0.75,0.5);

\coordinate (Q1) at (0,0,0);
\coordinate (Q2) at (1,-1,0);
\coordinate (Q3) at (0,-1,1);

\coordinate (B1) at (0,1,0);
\coordinate (B2) at (0,0,0.9);
\coordinate (B3) at (0.25,-0.75,1.1);
\coordinate (B4) at (1,-1,1.1);
\coordinate (B5) at (1.6,-1,0);
\draw[thick,->,>=stealth'] (NULL) -- (X) node[anchor=north east]{$y_1$};
\draw[thick,->,>=stealth'] (NULL) -- (Y) node[anchor=north west]{$y_2$};
\draw[thick,->,>=stealth'] (NULL) -- (Z) node[anchor=south]{$y_3$};

\draw[blue,thick,fill opacity=0.5,fill=darkgray!40!cyan] (P1) -- (P2) -- (P3) -- cycle;
\draw[blue,thick,fill opacity=0.5,fill=darkgray!50!cyan] (B1) -- (P1) -- (P2) -- (B5);
\draw[blue,thick,fill opacity=0.5,fill=darkgray!50!cyan] (B4) -- (P2) -- (B5);
\draw[blue,thick,fill opacity=0.5,fill=darkgray!50!cyan] (B3) -- (P3) -- (P2) -- (B4);
\draw[blue,thick,fill opacity=0.5,fill=darkgray!50!cyan] (B2) -- (P1) -- (P3) -- (B3);
\draw[blue,thick,fill opacity=0.5,fill=darkgray!50!cyan] (B2) -- (P1) -- (B1);

\draw[blue, fill=blue] (P1) circle (0.015);
\draw[blue, fill=blue] (P2) circle (0.015);
\draw[blue, fill=blue] (P3) circle (0.015);
\draw[blue] (.3,-.6) node {$\P$};
\draw[blue] (.3,-1.1) node {\small $(\frac{1}{4},\frac{1}{2},-\frac{3}{4})^T$};
\draw[blue] (1.2,-.85) node {\small $(1,0,-1)^T$};
\end{tikzpicture}
}
\resizebox{0.49\textwidth}{!}{%
\begin{tikzpicture}[x=45,y=45, scale=2.1]
\coordinate (NE) at (1.5,1.3);
\coordinate (SW) at (-.5,-.5);
\draw[white,fill=white] (NE) circle (0.01);
\draw[white,fill=white] (SW) circle (0.01);
%\draw[step=.2cm,very thin,color=gray] (0,0) grid (1,1);

\draw [opacity=0,fill opacity=.5, fill=gray!30!white] (0,0) -- (1,0) -- (0,1) -- cycle;
\draw[->, >=stealth'] (-0.02,0) -- (1.1,0) node[right] {$y_1$};
\draw[->, >=stealth'] (0,-0.02) -- (0,1.1) node[above] {$y_2$};
\foreach \x/\xtext in {0,1}
   \draw (\x,0.1pt) -- (\x,-0.1pt) node[anchor=north] {$\xtext$};
\foreach \x/\xtext in {0,1}
   \draw (0.1pt,\x) -- (-0.1pt,\x) node[anchor=east] {$\xtext$};

\draw [blue,thick,fill opacity=0.5,fill=darkgray!40!cyan] (0,0) -- (1,0) -- (0.25,0.5) -- cycle;
\draw [red,thick,dashed] (0,0) -- (1,0) -- (0,1) -- cycle;
\draw[blue] (.3,.2) node {$Y$};
\draw[blue] (.6,.51) node {\small $u=(\frac{1}{4},\frac{1}{2})^T$};
\draw[red] (.2,.65) node {$Y_{outer}$};
\draw[blue, fill=blue] (0,0) circle (0.015);
\draw[blue, fill=blue] (1,0) circle (0.015);
\draw[red, fill=red] (0,1) circle (0.015);
\draw[blue, fill=blue] (.25,.5) circle (0.015);
\end{tikzpicture}
}
\caption{Illustration of Example \ref{ex:p} for $q=2$.}
\label{fig:p}
\end{figure}

\begin{example}\label{ex:p}
	For $q\geq 2$ let $Y$ be a $q$-dimensional simplex given by
	 \begin{equation*}
	     Y = \left\{x \in \R^q \,\bigg\vert\;\; x_q\geq 0,\; x_q \leq 2 x_i, \; i \in \{1,\ldots,q-1\},\; \sum_{i=1}^{q-1} x_i + \frac{3}{2} x_q \leq 1\right\}
	 \end{equation*}
	 and let $\varepsilon \colonequals \frac{1}{q+2}$.
	 The $q+1$ vertices of $Y$ are $0$, the unit vectors $e^i$ for $i \in \{ 1, \ldots, q-1\}$ and the point
	 $$ u=\left( \frac{1}{q+2}, \ldots ,\frac{1}{q+2},\frac{2}{q+2} \right) ^T.$$
	The upper image $\P$ has a V-represenatiaion with extremal directions $d^i=e^i$ for $i\in \{1,\ldots,q+1\}$ and vertices	
	$$ \begin{pmatrix} 0 \\ 0 \end{pmatrix},\;
	   \begin{pmatrix} \phantom{-}e^1 \\ -1 \end{pmatrix},\;\ldots,\;
	   \begin{pmatrix} \phantom{-}e^{q-1} \\ -1 \end{pmatrix},\;
	   \begin{pmatrix} \phantom{-}u  \\ -e^T u \end{pmatrix}.
	   $$
	 Since $-1 < -e^T u$, the initial outer approximation computed by the primal Benson type algorithm (for $q \geq 2$) is
	 $$ \P_0 = \{-e^{q+1}\}+ \R^{q+1}.$$
	 The vertex $v=-e^{q+1}$ of $\P_0$ is chosen by the algorithm and we obtain $z=\frac{1}{q+1} > \varepsilon$. Since $v + z e$ is a convex combination of the vertices of $\P$ with positive coefficients
	 $$\lambda_1=\frac{1}{2(q+1)},\; \lambda_2=\frac{1}{2(q+1)},\; \ldots,\;\lambda_q=\frac{1}{2(q+1)},\;\lambda_{q+1}= \frac{q+2}{2(q+1)},$$
	 a cut with the hyperplane $H$ is made by the algorithm. The outer approximation $\P_1$ has the vertices
   $$\begin{pmatrix} 0 \\ 0 \end{pmatrix},\;
   \begin{pmatrix} \phantom{-}e^1 \\ -1 \end{pmatrix},\;\ldots,\;
   \begin{pmatrix} \phantom{-}e^{q-1} \\ -1 \end{pmatrix},\;
   \begin{pmatrix} \phantom{-}e^{q} \\ -1 \end{pmatrix}.
   $$
   The only vertex of $\P_1$ which is not in $\P$ is the last one in the list. We have
   $$ \begin{pmatrix} \phantom{-}u  \\ -e^T u \end{pmatrix} \leq \begin{pmatrix} \phantom{-}e^{q} \\ -1 \end{pmatrix} +  \varepsilon e.$$
   Thus, the algorithm terminates. The resulting outer approximation $Y_{outer}$ of $Y$ is the convex hull of
   $0$ and $e^i$ for $i\in \{1,\ldots,q\}$. For the approximation error we get
   $$ d_H(Y_{outer},Y) = \| e^q - u \| = \varepsilon \sqrt{q^2+q-1}.$$
\end{example}

\section{Error Bounds for the Dual Benson type Algorithm}\label{sec:dual}

The dual Benson type algorithm begins with an initial outer polyhedral approximation $\D_0$ of $\D$. Let $\bar x$ be an optimal solution to $($P$_1(\bar w))$ for some $\bar w > 0$ with $e^T \bar w = 1$.
The initial outer approximation can be defined by the initial inner approximation
$$\P_0 \colonequals \{\Gamma(\bar x)\} + \R^{q+1}_+$$
of $\P$ by the geometric duality relation as
$$\D_0 \colonequals \{y^*\in \R^{q+1} \mid \forall y \in \P_0:\; \varphi(y,y^*) \geq 0\}.$$
The dual Benson type algorithm constructs a sequence of shrinking polyhedral outer approximations $\D_k$ of the lower image $\D$ of the dual problem \ref{dual_MOCP}, where $k$ is the iteration index.  In each iteration, an arbitrary vertex $t$ of the current outer approximation $\D_k$ is chosen. Then a corresponding boundary point $s \colonequals D^*(t)$ of $\D$ is computed by solving \ref{P1} for $w=w(t)$. If $t_{q+1} - s_{q+1} > \varepsilon$, for a prescribed tolerance $\epsilon>0$, a portion of the current outer approximation $\D_k$ is cut off by a hyperplane supporting $\D$ at the point $s \in \D$. Otherwise, the algorithm continues to check other vertices of $\D_k$. If $t_{q+1} - s_{q+1} \leq \varepsilon$ for all vertices of $\D_k$, the algorithm terminates. By geometric duality, we obtain an inner approximation of $\P$ as
$$ \P_{inner} = \{y\in \R^{q+1} \mid \forall y^* \in \D_{outer}:\; \varphi(y,y^*) \geq 0\}.$$
In Algorithm \ref{alg:dual} we provide a pseudo code of this dual method. Note that for $x$ being an optimal solution of \ref{P1} for $w=w(t)$, we have
$$\varphi(\Gamma(x),t) =  w(t)^T \Gamma(x) - t_{q+1} = s_{q+1} - t_{q+1}.$$
Thus the stopping criterion can be expressed by the coupling function $\varphi$.

Note that we describe here a simplified  version of the algorithm only, because many details are not relevant for investigations on error bounds. For a more detailed version as well as implementation issues the reader is referred to \cite{Lohne14}.

\begin{algorithm}[ht]\label{alg:dual}
\caption{Simplified dual Benson type algorithm for MOCP (alternative Algorithm 2 in \cite{Lohne14})}
\LinesNumbered
\KwIn{$\epsilon$, $\D_0$}
\KwOut{An inner approximation $\P_{inner}$ of $\P$}
$T \leftarrow \emptyset$;
$\D_{outer} \leftarrow \D_0$;\\
compute the vertex set $\bar T$ of $\D_{outer}$;\\
\While {$\bar T \backslash T \neq \emptyset$}
{
   choose $t \in \bar T \backslash T$\\
   $x \leftarrow$ \textit{solve} $({\rm P}_1(w(t)))$;\\
   \uIf {$\varphi(\Gamma(x),t)< -\epsilon$}
    {
        $\D_{outer} \leftarrow \D_{outer} \bigcap \{y^*\in \R^{q+1}:\varphi(\Gamma(x),y^*) \geq 0\}$;\\
        update the vertex set $\bar T$ of $\D_{outer}$;\\
    }
    \Else
    {
        $T \leftarrow T \bigcup \{t\}$;\\
    }
}
$\P_{inner} = \{y \in \R^{q+1} \mid \forall y^* \in \D_{outer}:\;\varphi(y,y^*)\geq 0\}$;\\
\textbf{return} $\P_{inner}$
\end{algorithm}

The result of the following proposition can be also found in \cite{Lohne14}, formulated in terms of {\em finite $\varepsilon$-infimizers}. To make this exposition self-contained, we give a direct proof here.

\begin{proposition}\label{prop:dual}
	Let $\P_{inner}$ be the result of Algorithm \ref{alg:dual} for some given $\varepsilon > 0$. Then,
	$$ \P + \varepsilon \{e\} \subseteq \P_{inner}.$$
\end{proposition}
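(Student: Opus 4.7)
The starting point is the algebraic identity
\[
\varphi(y + \varepsilon e,\, y^*) = \varphi(y, y^*) + \varepsilon,
\]
which is a direct one-line computation since the coefficient of $\varepsilon$ in $\varphi(y+\varepsilon e, y^*)$ is $\sum_{i=1}^{q} y_i^* + (1 - \sum_{i=1}^{q} y_i^*) = 1$. With this identity in hand, the claim $\P + \varepsilon\{e\}\subseteq \P_{inner}$ is equivalent, via the defining formula of $\P_{inner}$, to showing
\[
\varphi(y, y^*) \ge -\varepsilon \qquad \text{for all } y \in \P,\; y^* \in \D_{outer}.
\]
I would also record the alternative form $\varphi(y, y^*) = w(y^*)^T y - y_{q+1}^*$, which unifies the stopping criterion and the duality pairing.

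The heart of the proof is to verify the inequality at every vertex $t$ of $\D_{outer}$. By construction $\D_{outer}\subseteq \D_0 \subseteq \T$, so $w(t)\ge 0$, and by the termination condition of Algorithm \ref{alg:dual} there exists $x \in X$ solving $($P$_1(w(t)))$ with $\varphi(\Gamma(x),t) \ge -\varepsilon$, i.e.\ $w(t)^T\Gamma(x)\ge t_{q+1}-\varepsilon$. For an arbitrary $y \in \P = \Gamma[X] + \R^{q+1}_+$, write $y = \Gamma(x') + r$ with $x'\in X$ and $r\ge 0$; then
\[
w(t)^T y \;=\; w(t)^T\Gamma(x') + w(t)^T r \;\ge\; w(t)^T\Gamma(x') \;\ge\; w(t)^T\Gamma(x) \;\ge\; t_{q+1} - \varepsilon,
\]
which is exactly $\varphi(y,t)\ge -\varepsilon$.

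To pass from vertices to all of $\D_{outer}$, I would characterize the recession cone of $\D_{outer}$ as $-K = \R_+(-e^{q+1})$. This follows because $\T$ has recession cone $\R e^{q+1}$, the initial cut $\varphi(\Gamma(\bar x),y^*)\ge 0$ has recession half-space containing $-K$ only, and every subsequent cut likewise contributes a half-space whose recession cone contains $-K$. Hence every $y^*\in \D_{outer}$ decomposes as $y^* = \sum_i \lambda_i t^i - \mu e^{q+1}$ with $\lambda_i\ge 0$, $\sum \lambda_i = 1$, $\mu\ge 0$, where the $t^i$ are vertices of $\D_{outer}$. A direct computation using $w(\cdot)$ linearity and $w(-e^{q+1})=0$ gives
\[
\varphi(y, y^*) \;=\; \sum_i \lambda_i\, \varphi(y, t^i) + \mu \;\ge\; -\varepsilon + \mu \;\ge\; -\varepsilon.
\]
Combined with the identity of the first paragraph, this yields $\varphi(y+\varepsilon e, y^*)\ge 0$ for every $y^* \in \D_{outer}$, i.e.\ $y+\varepsilon e\in \P_{inner}$.

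The main technical obstacle is the recession-cone argument: one needs to verify both that the algorithm's vertex decomposition makes sense (i.e.\ $\D_{outer}$ is line-free) and that no cut can introduce a new unbounded direction in which $\varphi(y,\cdot)$ decreases. Once this structural fact is in hand, the rest is bookkeeping with the stopping inequality and non-negativity of $w(t)$.
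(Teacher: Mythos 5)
Your proof is correct and follows essentially the same route as the paper: a vertex-plus-recession-cone decomposition of $\D_{outer}$, the stopping criterion applied at each vertex, and the identity $\varphi(y+\varepsilon e, y^*) = \varphi(y, y^*) + \varepsilon$ (equivalently, $\varphi(y,y^*-\varepsilon e^{q+1})=\varphi(y+\varepsilon e,y^*)$). The only difference is organizational: the paper packages the vertex inequality as $\D_{outer} - \varepsilon \{e^{q+1}\} \subseteq \D$ and then cites weak duality, whereas you derive the same inequality inline from the optimality of $x$ in $({\rm P}_1(w(t)))$ together with $w(t)\geq 0$.
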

\begin{proof}
	Proceeding analogously to the proof of Proposition \ref{remark_primal}, we see that the stopping criterion in Algorithm \ref{alg:dual} yields
	\begin{equation}\label{eq_3}
		\D_{outer} - \varepsilon \{e^{q+1}\} \subseteq \D.
	\end{equation}
	Let $y \in \P$. By weak duality, $\varphi(y,y^*)\geq 0$ holds for all $y^* \in \D$. From \eqref{eq_3} we get
	$$ \forall y^* \in \D_{outer}:\quad \varphi(y,y^*-\varepsilon e^{q+1})\geq 0.$$
	Since $\varphi(y,y^*-\varepsilon e^{q+1})=\varphi(y+\varepsilon e,y^*)$ this implies $y+\varepsilon e \in \P_{inner}$.	
\end{proof}

As in the previous section we again start with a bound for \ref{MOCP}.

\begin{proposition}\label{prop:douter}
For an arbitrary instance of problem \ref{MOCP} with $q+1$ objectives, let $\P$ be the upper image and $\P_{inner}$ be the result of Algorithm \ref{alg:dual} for some given $\varepsilon > 0$. Then
$$d_H(\P_{inner},\P) \leq \epsilon\sqrt{q+1}.$$
\end{proposition}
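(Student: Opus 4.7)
The plan is to mirror the proof of Proposition \ref{prop:pouter}, since the dual setting here is essentially symmetric to the primal one. The key ingredients are already in place: Proposition \ref{prop:dual} provides the containment $\P + \varepsilon\{e\} \subseteq \P_{inner}$, and the fact that $\P_{inner}$ is an inner approximation gives $\P_{inner} \subseteq \P$. Together these bracket $\P_{inner}$ between $\P$ and a shifted copy $\P + \{\varepsilon e\}$, which is precisely the configuration handled by Lemma \ref{lemma:dis}.

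Concretely, I would set $M \colonequals \P_{inner}$, $N \colonequals \P$, and $d \colonequals \varepsilon e$. By the inner-approximation property we have $M \subseteq N$, and Proposition \ref{prop:dual} yields $N + \{d\} \subseteq M$, so the hypothesis of Lemma \ref{lemma:dis} is met. Applying the lemma immediately gives
\begin{equation*}
	d_H(\P_{inner},\P) \leq \|d\|_2 = \|\varepsilon e\|_2 = \varepsilon \sqrt{q+1},
\end{equation*}
which is the claimed bound.

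There is essentially no obstacle: the heavy lifting has already been done in Proposition \ref{prop:dual}, where the stopping criterion of the dual algorithm is translated through geometric duality into the shift inclusion $\P + \varepsilon\{e\} \subseteq \P_{inner}$. The only thing to verify is that the sign conventions line up so that Lemma \ref{lemma:dis} applies with $N$ the larger set and $M$ the smaller one — this is automatic here, in contrast to the primal case where $\P_{outer}$ was the larger set and the shift $\P_{outer} + \varepsilon\{e\} \subseteq \P$ pointed the other way.
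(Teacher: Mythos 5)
Your proof is correct and is exactly the paper's argument: the paper likewise derives the bound by combining Proposition \ref{prop:dual} with Lemma \ref{lemma:dis}, and your identification $M=\P_{inner}$, $N=\P$, $d=\varepsilon e$ is the right instantiation. Nothing is missing; you have merely made explicit the step the paper leaves implicit.
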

\begin{proof}
This follows from Proposition \ref{prop:dual} and Lemma \ref{lemma:dis}.
\end{proof}

In the framework of arbitrary MOCP this bound is tight.

\begin{example}
	Let $S \colonequals \conv \{x^0,\,x^1,\,\ldots,\,x^{q+1}\}$ with
	$$ x^i \colonequals \frac{1}{i} \sum_{j=1}^i e^j, \qquad i\in \{1,\ldots,q+1\},$$
	$$ x^0 \colonequals \frac{1}{q+1}\sum_{i=1}^{q+1} x^i - \varepsilon e.$$
	Let $\bar \P \colonequals S+ \R^{q+1}_+$.
	We have $e^T x^i = 1$ for all $i\in \{1,\ldots,q+1\}$.
	For $\varepsilon = 0$, $x^0$ is a relative interior point of $\conv\{x^1,\ldots,x^{q+1}\}$. For sufficiently small $\varepsilon > 0$,
	the points $x^0,\ldots,x^{q+1}$ are the vertices of $\bar \P$.
		
	Consider a sufficiently small real number $\varepsilon > 0$. Choose $\bar w$ sufficiently close to $e^1$ such that $e^T \bar w = 1$ and $\bar w > 0$. Then $x^{q+1}$ is an optimal solution  to $($P$_1(\bar w))$.
	We obtain $\bar\P_0 = \{x^{q+1}\}+\R^{q+1}$. The vertices of $\bar \D_0$ are
    $$ t^1 \colonequals \begin{pmatrix} e^1 \\ \frac{1}{q+1} \end{pmatrix},\;\ldots,\;
    t^q \colonequals\begin{pmatrix} e^{q} \\ \frac{1}{q+1} \end{pmatrix},\;
    t^{q+1} \colonequals\begin{pmatrix} 0 \\ \frac{1}{q+1} \end{pmatrix}.
    $$
	We choose $t^{q+1}$. For $w=w(t^{q+1})=e^{q+1}$, $x^q$ is an optimal solution of \ref{P1} with optimal value $0$.
	For sufficiently small $\varepsilon >0$, the algorithm computes a new polyhedron
	$$ \bar\D_1 \colonequals \bar\D_0 \cap \left\{y^* \in \R^{q+1} \bigg\vert\; y^*_{q+1} \leq \frac{1}{q} \sum_{i=1}^q y^*_i \right\}.$$
	Since the vertex $t^q$ of $\bar\D_0$ also belongs to $\bar\D_1$, it is a vertex of $\bar\D_1$ and can be chosen next. We obtain $w=w(t^{q})=e^{q}$ and $x^{q-1}$ is an optimal solution of \ref{P1} with optimal value $0$. For sufficiently small $\varepsilon >0$, the algorithm computes a new polyhedron
	$$ \bar\D_2 \colonequals \bar\D_1 \cap \left\{y^* \in \R^{q+1} \bigg\vert\; y^*_{q+1} \leq \frac{1}{q-1} \sum_{i=1}^{q-1} y^*_i \right\}.$$
	Since the vertex $t^{q-1}$ of $\bar\D_0$ also belongs to $\bar\D_2$, it is a vertex of $\bar\D_2$ and can be chosen next. Proceeding in this way, we obtain the outer approximation $\bar\D_q$ of $\bar\D$ with corresponding inner approximation
	$$ \bar\P_q = \conv\left\{x^1,\,\ldots,\,x^{q+1}\right\}+\R^{q+1}_+$$
	of $\bar\P$.
	
	Let $\hat t$ be a vertex of $\bar\D_q$ which does not belong to $\bar\D$. An optimal solution of $($P$_1(w(\hat t)))$ is attained in a vertex of $\bar\P$. The points $x^1,\,\ldots,\,x^{q+1}$ cannot be optimal solutions of $($P$_1(w(\hat t)))$ because in this case, $\hat t$ was cut off in an earlier iteration step. Thus $x^0$ must be an optimal solution of $($P$_1(w(\hat t)))$. For $\varepsilon > 0$ being sufficiently small, we have $w(\hat t)> 0$. The only facet of $\bar\P_q$ with (inner) normal vector $w>0$ is $F \colonequals \conv\{x^1,\,\ldots,\,x^{q+1}\}$. Thus, geometric duality implies that $\hat t$ is uniquely defined by $\hat t = \Psi^{-1}(F) = (q+1)^{-1} e$. We have $\varphi(x^0,\hat t)= - \varepsilon$ and thus the algorithm terminates with $\bar\P_{outer} = \bar\P_q$. For sufficiently small $\varepsilon > 0$ we obtain $d_H(\bar\P_{outer},\bar\P)=\varepsilon \|e\|_2= \varepsilon \sqrt{q+1}$. To construct an example of \ref{MOCP}, we define $\P \colonequals \bar\P - \{\frac{1}{q+1} e\}$. From the preceding results, we have $\P_{outer} = \bar\P_{outer}-\{\frac{1}{q+1}e\}$. Then we obtain $d_H(\P_{outer},\P)=\varepsilon \sqrt{q+1}$
\end{example}

We next point out a special property of $\P_{inner}$, when \ref{MOCP} was obtained from \ref{CPP}.

\begin{proposition} \label{prop:dual2}
	Let an arbitrary instance of \ref{CPP} be given. Let $\P_{inner}$ be the result of Algorithm \ref{alg:dual} applied to the associated \ref{MOCP} for some given $\varepsilon > 0$. Then,
	$$ \P_{inner} = (\P_{inner} \cap H) + \R^{q+1}_+.$$
\end{proposition}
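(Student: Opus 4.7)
My plan is to show that $\P_{inner}$ can be written as $S + \R^{q+1}_+$ where $S$ is a compact convex subset of $H$; the proposition will then follow immediately, since any $y = s + r$ with $s \in S \subseteq H$ satisfies $s \in \P_{inner} \cap H$ (as $S \subseteq \P_{inner}$) and $r \in \R^{q+1}_+$, giving the non-trivial inclusion $\P_{inner} \subseteq (\P_{inner} \cap H) + \R^{q+1}_+$, while the reverse inclusion follows from $\R^{q+1}_+$ lying in the recession cone of $\P_{inner}$.

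First I would collect the primal data used by Algorithm \ref{alg:dual}: let $V = \{\bar x, x_1, x_2, \ldots\} \subseteq X$ consist of the initial $\bar x$ together with every $x$ computed in line 5 whose associated cut is added in line 7. The defining constraints of $\D_{outer}$ are then precisely $w(y^*) \geq 0$ (inherited from $\D_0$, which arises from $\P_0 = \{\Gamma(\bar x)\} + \R^{q+1}_+$) together with $\varphi(\Gamma(x),y^*) \geq 0$ for $x \in V$. In particular, every $y^* \in \D_{outer}$ satisfies $w(y^*) \geq 0$, so $\R^{q+1}_+$ lies in the recession cone of $\P_{inner}$ because for $r \geq 0$ we have $\varphi(y+r,y^*) = \varphi(y,y^*) + \langle r, w(y^*)\rangle \geq 0$ whenever $\varphi(y,y^*) \geq 0$.

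The core step is then the identity $\P_{inner} = S + \R^{q+1}_+$, where $S \colonequals \conv\{\Gamma(x) : x \in V\}$. Note $S \subseteq \Gamma[X] \subseteq H$ because $e^T \Gamma(x) = 0$ for every $x \in X$. The inclusion $S + \R^{q+1}_+ \subseteq \P_{inner}$ is straightforward: $\varphi$ is affine in its first argument, so for $y = \sum_i \lambda_i \Gamma(x_i) + r$ with $\sum \lambda_i = 1$, $\lambda_i \geq 0$, $r \geq 0$ and any $y^* \in \D_{outer}$, one has $\varphi(y,y^*) = \sum_i \lambda_i \varphi(\Gamma(x_i),y^*) + \langle r, w(y^*)\rangle \geq 0$. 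The main obstacle is the reverse inclusion, which I would handle by a separation argument: if $y \notin S + \R^{q+1}_+$, then since $S + \R^{q+1}_+$ is closed and convex, there exists $h$ with $\langle h, y\rangle < \inf_{z \in S+\R^{q+1}_+} \langle h,z\rangle$. Finiteness of the infimum forces $h \geq 0$ (so $e^T h > 0$, else $h = 0$), and after normalizing $e^T h = 1$ we may write $h = w(t)$ for $t = (h_1,\ldots,h_q, t_{q+1})^T$; choosing $t_{q+1} \colonequals \min_{x\in V} \langle \Gamma(x), w(t)\rangle$ gives $t \in \D_{outer}$ yet $\varphi(y,t) = \langle y,h\rangle - t_{q+1} < 0$, contradicting $y \in \P_{inner}$.

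With $\P_{inner} = S + \R^{q+1}_+$ and $S \subseteq H$ in hand, the decomposition $y = s + r$ with $s \in S \subseteq \P_{inner} \cap H$ and $r \in \R^{q+1}_+$ yields $\P_{inner} \subseteq (\P_{inner} \cap H) + \R^{q+1}_+$, while the opposite inclusion is immediate from the recession-cone property established in the second paragraph. The main subtlety to watch is the normalization step in the separation argument: one must rule out $e^T h = 0$, which is ensured by $h \geq 0, h \neq 0$, and verify that $t$ so constructed indeed lies in $\D_{outer}$, i.e., that every constraint defining $\D_{outer}$ is satisfied, which is exactly how $t_{q+1}$ was chosen.
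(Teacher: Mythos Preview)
Your proof is correct and takes a genuinely different route from the paper's. The paper's argument observes that $\P_{inner}$ has recession cone $\R^{q+1}_+$, and then invokes the geometric duality theorem (Theorem~\ref{thm:duality}, applied to an MOLP with upper image $\P_{inner}$) to show that every vertex $y$ of $\P_{inner}$ corresponds to a $K$-maximal facet of $\D_{outer}$; since every such facet is one of the cutting hyperplanes $\{y^*:\varphi(\Gamma(x),y^*)=0\}$ introduced by the algorithm, the one-to-one property of $\Psi$ forces $y=\Gamma(x)\in H$. You instead write down the explicit H-representation of $\D_{outer}$ and prove directly via a separation argument that $\P_{inner}=\conv\{\Gamma(x):x\in V\}+\R^{q+1}_+$, from which the result is immediate. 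Your approach is more elementary and self-contained, avoiding the appeal to the geometric duality machinery; the paper's version is terser but leans on Theorem~\ref{thm:duality} as a black box. Both arguments ultimately identify the vertices of $\P_{inner}$ with the points $\Gamma(x)$, $x\in V$, and hence with points of $H$.
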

\begin{proof}
	From $\P_0 \subseteq \P_{inner} \subseteq \P$, we conclude that $\P_{inner}$ has the recession cone $\R^{q+1}_+$, i.e. $\P_{inner}$ can be expressed by its vertices $\vertex \P_{inner}$ as
	$$ \P_{inner} = \vertex \P_{inner} + \R^{q+1}_+.$$
	 The inclusion $\supseteq$ is now obvious. To prove the converse inclusion we show that each vertex of $\P_{inner}$ belongs to $H$. Let $y$ be a vertex of $\P_{inner}$. Geometric duality (applied to an MOLP with upper image $\P_{inner}$) yields that $F^* \colonequals \Psi^{-1}(\{y\})$ is a $K$-maximal facet of $\D_{outer}$. In Algorithm \ref{alg:dual} we see that such a facet has the form $F^*=\Psi^{-1}(\{\Gamma(x)\})$ for some $x \in X$. Since $\Psi$ is one-to-one, we get $y=\Gamma(x) \in H$.
\end{proof}

Now we prove the error bound for \ref{CPP} solved with Algorithm \ref{alg:dual}. We obtain the same bound as for Algorithm \ref{alg:primal}.

\begin{theorem}\label{Huas:dual}
		Let an arbitrary instance of \ref{CPP} be given. Let $\P_{inner}$ be the result of Algorithm \ref{alg:dual} applied to the associated \ref{MOCP} for some given $\varepsilon > 0$ and let $Y_{inner} \colonequals \pi[\P_{inner} \bigcap H]$. Then $Y_{inner} \subseteq Y$ and
	$$d_H(Y_{inner},Y) \leq \epsilon\sqrt{q^2+q-1}.$$	
%Let $\epsilon > 0$ be the tolerance in Algorithm \ref{alg:dual}. An inner approximation $\P_{inner}$ of the upper image $\P$ is calculated by solving \ref{MOCP} with Algorithm \ref{alg:dual}. Moreover, an corresponding inner approximation of $Y$ is obtained from Proposition \ref{prop:appY}(ii), denoted by $Y_{inner}$.  Then we have $$d_H(Y_{inner},Y) \leq \epsilon\sqrt{q^2+q-1}.$$ Let this upper bound be denoted by $d_{CPP}^{i}$ and $\gamma \colonequals \sqrt{q-\frac{1}{q+1}}$. We have $$d_{CPP}^{i} = \gamma\: d_{MOCP}^{i} < \sqrt{q}\: d_{MOCP}^{i}.$$
\end{theorem}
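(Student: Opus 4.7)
The plan is to mirror the primal argument in Theorem \ref{prop:primaldis}, but with the roles of ``approximation'' and ``true set'' swapped, and with Proposition \ref{prop:dual2} doing the extra work needed in the dual case. The inclusion $Y_{inner} \subseteq Y$ is immediate from Proposition \ref{prop:appY}(ii), so everything reduces to showing that for every $\bar u \in Y$ there exists $\bar v \in Y_{inner}$ with $\|\bar v - \bar u\|_2 \leq \varepsilon \sqrt{q^2+q-1}$; since $d_H(Y_{inner},Y) = \sup_{\bar u \in Y} \inf_{\bar v \in Y_{inner}} \|\bar v - \bar u\|_2$ (using $Y_{inner}\subseteq Y$), this yields the claimed bound.

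First I would take an arbitrary $\bar u \in Y$ and lift it to $u \colonequals (\bar u^T, -e^T \bar u)^T = \Gamma(x)$ for some $x \in X$, so $u \in \Gamma[X] \subseteq \P \cap H$. By Proposition \ref{prop:dual} we have $u + \varepsilon e \in \P_{inner}$. This point is no longer in $H$, so to return to $Y_{inner}$ I would invoke Proposition \ref{prop:dual2}, which gives the decomposition
\begin{equation*}
\P_{inner} = (\P_{inner} \cap H) + \R^{q+1}_+.
\end{equation*}
Therefore $u + \varepsilon e = v + r$ for some $v \in \P_{inner}\cap H$ and $r \in \R^{q+1}_+$, and the point $\bar v \colonequals \pi(v) \in Y_{inner}$ is the candidate close to $\bar u$.

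It remains to estimate $\|\bar v - \bar u\|_2$. From $v = u + \varepsilon e - r$ with $r \geq 0$ we get $v_i - u_i \leq \varepsilon$ for all $i \in \{1, \ldots, q+1\}$. On the other hand, both $u$ and $v$ lie in $H$, so $\sum_{i=1}^{q+1}(v_i - u_i) = 0$, which combined with $v_{q+1}-u_{q+1}\leq \varepsilon$ yields $-\sum_{i=1}^q (v_i - u_i) \leq \varepsilon$. Hence the displacement $\bar v - \bar u \in \R^q$ lies in the very same polytope
\begin{equation*}
T = \{t \in \R^q : -e^T t \leq \varepsilon,\; t \leq \varepsilon e\}
\end{equation*}
that was analyzed in the proof of Theorem \ref{prop:primaldis}. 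Reusing the vertex enumeration of $T$ given there, $\max\{\|t\|_2 : t \in \vertex T\} = \varepsilon \sqrt{q^2+q-1}$, which completes the bound.

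The only nontrivial step is the use of Proposition \ref{prop:dual2}: without it, $u+\varepsilon e$ would sit strictly above $H$ and there would be no a~priori control on how its ``projection'' to $H \cap \P_{inner}$ behaves. The fact that the projection can be chosen so that each coordinate only decreases (i.e.\ $r \geq 0$) is exactly what forces $\bar v - \bar u$ into the same polytope $T$ and delivers the matching constant $\sqrt{q^2+q-1}$ as in the primal case.
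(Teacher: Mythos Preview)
Your proof is correct and follows essentially the same route as the paper's: lift a point of $Y$ to $\P\cap H$, push it into $\P_{inner}$ via Proposition~\ref{prop:dual}, drop back to $\P_{inner}\cap H$ using Proposition~\ref{prop:dual2}, and then observe that the resulting displacement in $\R^q$ lies in the same simplex $T$ analyzed in Theorem~\ref{prop:primaldis}. The only differences are cosmetic (variable names and a slightly more explicit derivation of the constraint $-e^T t \leq \varepsilon$).
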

\begin{proof}
	Let $\bar v$ be an arbitrary point in $Y$. Then $v \colonequals(\bar v^T,-e^T\bar v)^T \in \P \bigcap H$.
	By Proposition \ref{prop:dual}, we have $v + \varepsilon e \in \P_{inner}$. By Proposition \ref{prop:dual2}, there exists $o \in \P_{inner}\cap H$ with $o \leq v+ \epsilon e$. We have $\bar o \colonequals \pi(o) \in Y_{inner}$, $\bar o_i \leq \epsilon + \bar v_i$, $i=1,\ldots,q$ and $o_{q+1}=-e^T \bar o \leq \epsilon + v_{q+1} = \epsilon -e^T\bar v$.
	Thus the point $\bar o - \bar v$ belongs to the polytope $T \colonequals \{t\in\R^{q} \mid -e^T t \leq \epsilon,\; t \leq \epsilon e\}$. Like in the proof of Theorem \ref{prop:primaldis}, we obtain $\|\bar o-\bar v\|_2 \leq \epsilon \sqrt{q^2+q-1}$.
	Since $Y_{inner} \subseteq Y$ and for arbitrary $\bar v \in Y$ there exists $\bar o \in Y_{inner}$ with $\|\bar o-\bar v\|_2 \leq \epsilon \sqrt{q^2+q-1}$, we conclude that $d_H(Y_{inner},Y) \leq \epsilon\sqrt{q^2+q-1}$.
\end{proof}

The bound in Theorem \ref{Huas:dual} is tight for any dimension $q\geq 1$ as shown in the following example, see also Figure \ref{fig_d}.
\begin{figure}[hbt]
\resizebox{0.49\textwidth}{!}{%
\centering
\begin{tikzpicture}[x=45,z=-13,y=35,scale=2.1]
\coordinate (NE) at (1.5,1.3);
\coordinate (SW) at (-.5,-1.5);
\draw[white,fill=white] (NE) circle (0.01);
\draw[white,fill=white] (SW) circle (0.01);
\coordinate (NULL) at (0,0,0);
\coordinate (X) at (1.2,0,0);
\coordinate (Y) at (0,0,1.6);
\coordinate (Z) at (0,1.2,0);
\coordinate (P1) at (0,0,0);
\coordinate (P2) at (1,-1,0);
\coordinate (P3) at (0,-1,1);
\coordinate (B1) at (0,1,0);
\coordinate (B2) at (1.5,-1,0);
\coordinate (B3) at (0,-1,1.5);
\coordinate (R) at (1/3,-2/3,1/3);
\coordinate (RX) at (1/3+1/2,-2/3,1/3);
\coordinate (RY) at (1/3,-2/3+1,1/3);
\coordinate (RZ) at (1/3,-2/3,1/3+1/2);
\draw[thick,->,>=stealth'] (NULL) -- (X) node[anchor=north east]{$y_1$};
\draw[thick,->,>=stealth'] (NULL) -- (Y) node[anchor=north west]{$y_2$};
\draw[thick,->,>=stealth'] (NULL) -- (Z) node[anchor=south]{$y_3$};
\draw[opacity=0, fill opacity=0.5,fill=gray!50!red] (R) -- (RX) -- (RY) -- cycle;
\draw[opacity=0, fill opacity=0.5,fill=gray!50!red] (R) -- (RY) -- (RZ) -- cycle;
\draw[opacity=0, fill opacity=0.5,fill=gray!50!red] (R) -- (RX) -- (RZ) -- cycle;
\draw[red,thick, dashed] (R) -- (RX);
\draw[red,thick, dashed] (R) -- (RY);
\draw[red,thick, dashed] (R) -- (RZ);
\draw[blue,thick,fill opacity=0.5,fill=darkgray!40!cyan] (P1) -- (P2) -- (P3) -- cycle;
\draw[blue,thick,fill opacity=0.5,fill=darkgray!50!cyan] (B1) -- (P1) -- (P2) -- (B2);
\draw[blue,thick,fill opacity=0.5,fill=darkgray!50!cyan] (B2) -- (P2) -- (P3) -- (B3);
\draw[blue,thick,fill opacity=0.5,fill=darkgray!50!cyan] (B3) -- (P3) -- (P1) -- (B1);
\draw[blue, fill=blue] (P1) circle (0.015);
\draw[blue, fill=blue] (P2) circle (0.015);
\draw[blue, fill=blue] (P3) circle (0.015);
\draw[red, fill=red] (R) circle (0.015);
\draw[red] (.5,-.65) node {$\P_{inner}$};
\draw[blue] (.7,-.4) node {$\P$};
\end{tikzpicture}
}
\resizebox{0.49\textwidth}{!}{%
\begin{tikzpicture}[x=45,y=45, scale=2.1]
\coordinate (NE) at (1.5,1.3);
\coordinate (SW) at (-.5,-.5);
\draw[white,fill=white] (NE) circle (0.01);
\draw[white,fill=white] (SW) circle (0.01);
\draw[->, >=stealth'] (-0.02,0) -- (1.1,0) node[right] {$y_1$};
\draw[->, >=stealth'] (0,-0.02) -- (0,1.1) node[above] {$y_2$};
\foreach \x/\xtext in {0,1}
   \draw (\x,0.01) -- (\x,-0.01) node[anchor=north] {$\xtext$};
\foreach \x/\xtext in {0,1}
   \draw (0.01,\x) -- (-0.01,\x) node[anchor=east] {$\xtext$};
\draw[blue,thick,fill opacity=0.5,fill=darkgray!40!cyan] (0,0) -- (1,0) -- (0,1) -- cycle;
\draw[red, fill=red] (1/3,1/3) circle (0.015);
\draw[red] (.4,.43) node {$Y_{inner}$};
\draw[blue] (.6,.15) node {$Y$};
\end{tikzpicture}
}
\caption{Illustration of Example \ref{exam:dual} for $q=2$.}\label{fig_d}
\end{figure}
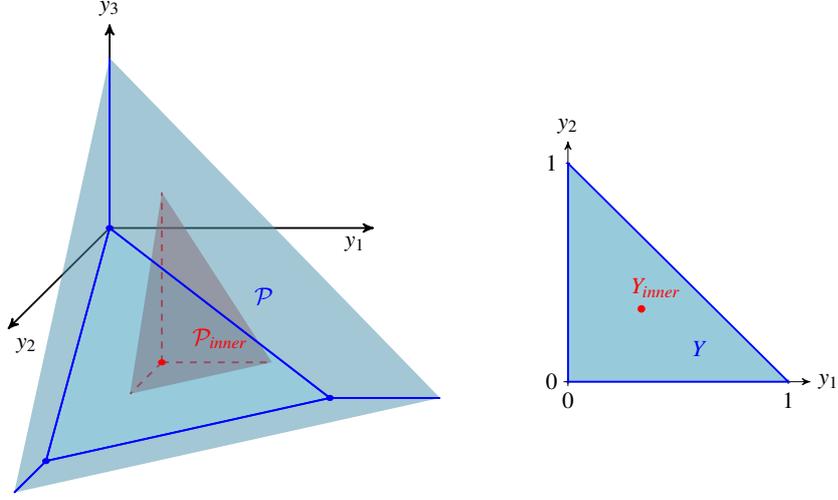

\newpage
\begin{example}\label{exam:dual}
	Let $\varepsilon = \frac{1}{q+1}$ and let
	$$ Y=X=\{ x \in \R^q \mid x \geq 0,\, e^T x \leq 1\}.$$
	The vertices of $Y$ are $0$ and unit vectors $e^i$ for $i\in \{1,\dots, q\}$. A V-representation of $\P$ is given by the extremal directions $e^i$ for $i\in \{1,\dots, q\}$ and the vertices
	 $$ \begin{pmatrix}
	 	\phantom{-}e^1 \\ -1
	 \end{pmatrix} , \ldots,
	  \begin{pmatrix}
	 \phantom{-} e^q \\ -1
	 \end{pmatrix},\;
	  \begin{pmatrix}
	   0 \\ 0
	 \end{pmatrix} $$
	 of $\P$. From geometric duality we obtain an H-representation of $\D$ as
	 $$ \D=\left\{y^* \in \R^{q+1} \bigg\vert\; \forall j \in \{1,\ldots,q\}:\; y^*_j \geq 0,\; \sum_{i=1}^q y^*_i \leq 1,\; y^*_{q+1} \leq y^*_j + \sum_{i=1}^q y^*_i -1,\; y^*_{q+1} \leq 0 \right\}.$$
	 To get the initial outer approximation $\D_0$ of $\D$, we solve $($P$_1(w))$ for $w=\frac{1}{q+1}e$. The set of optimal solutions is $X$. We choose
	 $$x \colonequals \frac{1}{q+1}e \in X,\quad
    \Gamma(x) = \frac{1}{q+1}\begin{pmatrix}
	   \phantom{-}e \\ -q
	 \end{pmatrix} \in \Gamma[X].$$
	  This yieds $\D_0$ with H-representation
	 $$ \D_0=\left\{y^* \in \R^{q+1} \bigg\vert\; \forall j \in \{1,\ldots,q\}:\; y^*_j \geq 0,\; \sum_{i=1}^q y^*_i \leq 1,\; y^*_{q+1} \leq \sum_{i=1}^q y^*_i -\frac{q}{q+1} \right\}$$
	 and vertices
	 $$ \begin{pmatrix}
	 	e^1 \\ \varepsilon
	 \end{pmatrix} , \ldots,
	  \begin{pmatrix}
	 e^q \\ \varepsilon
	 \end{pmatrix},\;
	  \begin{pmatrix}
	   0 \\ \varepsilon-1
	 \end{pmatrix}.$$
	 Since
 	 $$ \begin{pmatrix}
 	 	e^1 \\ 0
 	 \end{pmatrix} , \ldots,
 	  \begin{pmatrix}
 	 e^q \\ 0
 	 \end{pmatrix},\;
 	  \begin{pmatrix}
 	   \phantom{-}0 \\ -1
 	 \end{pmatrix} \in \D%,\;
 	  %\frac{1}{q+1}\begin{pmatrix}
 	  % e \\ 0
 	 %\end{pmatrix}
 	 ,$$
	 the algorithm terminates. We obtain
	 $$ \P_{inner} = \{\Gamma(x)\} + \R^{q+1}_+ \quad \text{and} \quad Y_{inner} = \left\{\frac{1}{q+1} e\right\}.$$
	 Thus
	 $$ d_H(Y,Y_{inner}) \geq \left\|e^1 - \frac{1}{q+1} e\right\| = \varepsilon \sqrt{q^2+q-1}.$$
\end{example}

\section{Conclusion} \label{sec:conclusion}
Outer or inner polyhedral approximations $Y_{approx}$ of a convex body $Y$ can be obtained from approximate solutions of a convex projection problem \ref{CPP}. An approximate solution of \ref{CPP} can be obtained by solving an associated multiobjective convex program (MOCP) with the primal or dual Benson type algorithm for a given tolerance $\varepsilon >0$. We have shown that the Hausdorff distance between $Y$ and its polyhedral approximations $Y_{approx}$ obtained by these methods is bounded tightly by $\varepsilon \sqrt{q^2+q-1}$, where $q \geq 2$ is the dimension of $Y$.

\vskip 6mm
\noindent{\bf Acknowledgments}

\noindent   The authors thank both reviewers for their helpful comments. The third author was partially supported by the National Natural Science Foundation of China under Grant No. 12071025.

%\bibliography{reference}

\end{document}